\DeclareFontFamily{U}{mathx}{\hyphenchar\font45}
\DeclareFontShape{U}{mathx}{m}{n}{
      <5> <6> <7> <8> <9> <10>
      <10.95> <12> <14.4> <17.28> <20.74> <24.88>
      mathx10
      }{}
\newtheorem{theorem}{Theorem}[section]
\newtheorem*{theoremA}{Theorem A}
\newtheorem*{theoremB}{Theorem B}
\newtheorem{corollary}[theorem]{Corollary}
\newtheorem{proposition}[theorem]{Proposition}
\theoremstyle{remark}
\newtheorem*{claim*}{Claim}
\theoremstyle{definition}
\newtheorem{definition}[theorem]{Definition}
\numberwithin{equation}{section}
\newcommand{\nn}[1]{{\left\vert\kern-0.25ex\left\vert\kern-0.25ex\left\vert #1 
\right\vert\kern-0.25ex\right\vert\kern-0.25ex\right\vert}}
\renewcommand{\leq}{\leqslant}
\renewcommand{\geq}{\geqslant}
\newcounter{smallromans}
\newcommand{\R}{\mathbb{R}}
\newcommand{\N}{\mathbb{N}}
\newcommand{\e}{\varepsilon}
\newcommand{\n}{\left\Vert\cdot\right\Vert}
\newcounter{smallromansdash}
\newcounter{bigromans} 
{\end{list}}
\begin{document}
\title[A note on symmetric separation in Banach spaces]{A note on symmetric separation in Banach spaces}

\author[T.~Russo]{Tommaso Russo}
\address[T.~Russo]{Department of Mathematics\\Faculty of Electrical Engineering\\
Czech Technical University in Prague\\Technick\'a 2, 166 27 Praha 6\\ Czech Republic}
\email{russotom@fel.cvut.cz}

\thanks{Research of the author was supported by the project International Mobility of Researchers in CTU CZ.02.2.69/0.0/0.0/16$\_$027/0008465 and by Gruppo Nazionale per l'Analisi Matematica, la Probabilit\`a e le loro Applicazioni (GNAMPA) of Istituto Nazionale di Alta Matematica (INdAM), Italy.}

\keywords{Symmetrically separated vectors, (symmetric) Kottman's constant, non-strict Opial property, Tsirelson's space.}
\subjclass[2010]{46B20, 46B04 (primary), and 46B15, 46B06 (secondary).}
\date{\today}

\begin{abstract} We present some new results on the symmetric Kottman's constant $K^s(X)$ of a Banach space $X$ and its relationship with the Kottman constant. We show that $K^s(X)>1$, for every infinite-dimensional Banach space, thereby solving a problem by J.M.F.~Castillo and P.L.~Papini. We also investigate such constant in the class of Banach spaces admitting $c_0$ spreading models, answering in particular one question from our previous joint paper with P.~H\'ajek and T.~Kania.
\end{abstract}
\maketitle

\section{Introduction}
The study of distances between unit vectors is an important topic in Banach space theory, whose origin can be traced back at least to the classical Riesz' lemma \cite{Riesz} asserting that the unit ball of every infinite-dimensional normed space contains a $1$-separated sequence (namely, a sequence of vectors whose mutual distances are at least $1$). The result was generalised by Kottman \cite{Kottman}, who constructed in every infinite-dimensional normed space a sequence of unit vectors whose mutual distances are strictly greater than $1$. In the same paper, the author also introduced a parameter, nowadays known as \emph{Kottman's constant}
$$K(X):=\sup\left\{\sigma\geq0\colon \exists\, (x_n)_{n=1}^\infty\subseteq B_X\; \sigma\text{-separated} \right\}$$
and he conjectured that $K(X)>1$ for every infinite-dimensional normed space.

Such a conjecture was indeed solved in the positive by the celebrated Elton--Odell theorem \cite{E-O}. Since then, plenty of results are available in the literature that compute or estimate the Kottman constant for various classes of Banach spaces. As a sample of some these results, and source for several additional references, let us refer to \cite{CGP, CaPa, delpech, KrP, MaPa, prus}.\smallskip

Several variations of the above problem have been considered in the literature. Let us mention, among them, the study of equilateral sets \cite{FOSS14, Koszmider, MV equilateral, mv, Terenzi1, Terenzi2}, antipodal sets \cite{GlMe} and symmetrically separated sets \cite{CGP, CaPa, HKR}; this last notion will be the one relevant for our note.
Let us therefore remind that a subset $A$ of a normed space $X$ is \emph{symmetrically $\delta$-separated} when $\|x\pm y\|\geq\delta$ for any distinct elements $x,y\in A$; accordingly, one also introduces the \emph{symmetric Kottman's constant} as follows:
$$K^s(X):=\sup\left\{\sigma\geq0\colon\exists(x_n)_{n=1}^\infty\subset B_X\; \text{symmetrically }\sigma\text{-separated}\right\}.$$
Such constant has been explicitly defined for the first time in \cite{CaPa}, although the notion of symmetric separation could be traced back at least to \cite[Definition 2.1]{NaSa} and it is closely related to James' work on uniformly non-square Banach spaces \cite{james-distortion} (\emph{cf.} \cite[p.~78-79]{CaPa}).\smallskip

In this context the first natural question (stated as Problem 1 in \cite{CaPa}) is the validity of a symmetric counterpart to the Elton--Odell theorem, \emph{i.e.}, is $K^s(X)>1$ for every infinite-dimensional Banach space $X$? It is, for example, an immediate consequence of James' non-distortion theorem \cite{james-distortion} that $K^s(X)=2$, whenever $X$ contains a copy of $c_0$ or $\ell_1$. Moreover, an adaptation of the same argument to $\ell_p$ yields that $K^s(X)\geq2^{1/p}$, whenever $X$ contains a copy of $\ell_p$ ($1\leq p<\infty$), \cite[Theorem 3]{Kottman}. Castillo and Papini \cite{CaPa} also proved that $K^s(X)>1$ whenever $X$ is uniformly non-square, or a $\mathscr{L}_\infty$-space. Let us refer to the recent paper \cite{HKR} for more detailed references to the literature and for stronger results in the same direction; it is proven, for instance, that $K^s(X)>1$ whenever $X$ contains an infinite-dimensional separable dual Banach space, or an unconditional basic sequence.\smallskip

Although these results cover quite a large class of Banach spaces, the problem in its full generality is not settled by any of the above papers. Our first main contribution in this note consists in showing how to derive a positive answer directly from the Elton--Odell theorem. To wit, we prove the following theorem.

\begin{theoremA} Let $X$ be an infinite-dimensional Banach space. Then, for some $\e>0$, the unit ball of $X$ contains a symmetrically $(1+\e)$-separated sequence.
\end{theoremA}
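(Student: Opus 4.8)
The plan is to derive the symmetric statement from the (non\-symmetric) Elton--Odell theorem by a structural reduction. I would begin by applying Elton--Odell to fix $\varepsilon>0$ and a normalised sequence $(x_n)\subseteq S_X$ with $\|x_n-x_m\|\geq1+\varepsilon$ for $n\neq m$, and then, by the Bessaga--Pe\l czy\'nski selection principle, assume after passing to a subsequence that $(x_n)$ is basic. Rosenthal's $\ell_1$ theorem provides a further subsequence that is either equivalent to the unit vector basis of $\ell_1$ or weakly Cauchy; in the former case $X$ contains $\ell_1$ and, as recalled in the introduction, James's non-distortion theorem gives even $K^s(X)=2$, so that $B_X$ certainly contains a symmetrically $(1+\varepsilon)$-separated sequence. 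The heart of the matter is therefore the weakly Cauchy case.

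In that case the idea is to play off two mechanisms against each other. On the one hand, after replacing $(x_n)$ by a weakly null separated sequence $(z_n)$ (by subtracting the weak limit when it lies in $X$, and otherwise by passing to a difference or a shift that is weakly null), weak lower semicontinuity of the norm applied to $z_n\pm z_m\rightharpoonup z_n$ as $m\to\infty$ forces $\liminf_m\|z_n\pm z_m\|\geq\|z_n\|$; a diagonal extraction then yields a subsequence whose pairwise sums $\|z_n+z_m\|$ are essentially at least $\|z_n\|$, while its pairwise differences remain $\geq1+\varepsilon$. On the other hand, a Ramsey-type stabilisation lets me assume that $\|z_n\|$, $\|z_n-z_m\|$ and $\|z_n+z_m\|$ converge along the subsequence to values $\ell$, $d\geq1+\varepsilon$ and $s\geq\ell$ respectively. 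If $s$ is strictly larger than $\ell$ (and $\ell$ is controlled), then suitably normalised vectors among the $z_n$ are already symmetrically $(1+\delta)$-separated for some $\delta>0$, using also that $d\geq1+\varepsilon$; so one wins in this alternative.

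The remaining, genuinely delicate, case is the ``$c_0$-like'' one, where $s=\ell$: then the normalised sequence generates a spreading model $(\tilde e_n)$ which is spreading, $1$-suppression unconditional, and satisfies $\|\tilde e_1+\tilde e_2\|=\|\tilde e_1\|$ while $\|\tilde e_1-\tilde e_2\|\geq1+\varepsilon$, i.e.\ a $c_0$-spreading model. Here one has to step outside the spreading model (the naive candidates built from it are only symmetrically $1$-separated, the spreading model being essentially an extremal object) and exploit the almost isometric copies of $\ell_\infty^n$ that $X$ then contains along suitable blocks, transplanting into them the classical $c_0$-construction $\sum_{i\leq n}e_i-\sum_{n<i\leq2n}e_i$ of a symmetrically $(2-\eta)$-separated sequence. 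I expect this last case to be the main obstacle: it is exactly where the proof touches the study of spaces admitting $c_0$-spreading models that is announced in the abstract, and where the real work has to be done, the earlier steps being essentially bookkeeping on top of known structural theorems.
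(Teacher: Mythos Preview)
Your case decomposition is right, and you have correctly located the crux in the alternative $s=\ell$ (equivalently, after Ramsey, $\|u_n+u_k\|\leq1+\eta$ for all $n\neq k$). The gap is in your proposed resolution of that case. The plan to transplant a $c_0$-construction such as $\sum_{i\leq n}e_i-\sum_{n<i\leq2n}e_i$ into $X$ via the spreading-model apparatus cannot work: those vectors have supports of unbounded length, whereas a spreading model only controls the joint behaviour of a \emph{fixed} finite number of blocks. The paper makes exactly this point when motivating Theorem~B, and Theorem~B itself shows that for every $\delta>0$ there is a space every whose spreading model is isomorphic to $c_0$ yet $K(X)\leq1+\delta$; so any argument that, in your hard case, extracts from the $c_0$-type spreading model a symmetrically separated sequence with separation close to $2$ must fail. (There is also a smaller issue: $\|\tilde e_1+\tilde e_2\|=1$ together with $1$-suppression unconditionality does not force the spreading model to be isomorphic to $c_0$, nor does it by itself yield \emph{almost isometric} copies of $\ell_\infty^n$ in $X$.) Your reading of the abstract misled you here: the $c_0$-spreading-model discussion in the paper belongs to the \emph{negative} result, not to the proof of Theorem~A.

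The paper's resolution is a one-line trick that sidesteps the $c_0$ analysis entirely. One first produces a weakly null normalised basic sequence with basis constant below $4/3$ and \emph{then} applies the block form of Elton--Odell to it (assuming $c_0,\ell_1\not\hookrightarrow X$), obtaining a normalised, weakly null, $(1+\varepsilon)$-separated block sequence $(u_j)$. After passing by Ramsey to the colour $\|u_n+u_k\|\leq1+\eta$ for all $n\neq k$, with $\eta<\min\{\varepsilon,1/2\}$, set $y_j:=(u_1+u_{j+1})/(1+\eta)\in B_X$. Then $\|y_n-y_k\|=\|u_{n+1}-u_{k+1}\|/(1+\eta)\geq(1+\varepsilon)/(1+\eta)>1$, while
\[
\|y_n+y_k\|=\frac{\|2u_1+u_{n+1}+u_{k+1}\|}{1+\eta}\geq\frac{3}{4}\cdot\frac{\|2u_1\|}{1+\eta}=\frac{3}{2(1+\eta)}>1,
\]
the inequality using only the basis projection onto the first block. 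Adding the common anchor $u_1$ makes the sums large for free via the basis constant; the case you flagged as the main obstacle is in fact the easy one.
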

It is important to to observe that, although Theorem A subsumes many results from \cite{CGP, CaPa, HKR}, it doesn't reduce their interest and, in a sense, motivates them; indeed, the validity of Theorem A tells that the symmetric Kottman's constant is a reasonable variation over Kottman's constant and therefore stimulates its investigation. The proof of the Theorem, together with some quantitative improvements, will be presented in \S \ref{Sec: Ks and K}; a second, similar, its proof will be given in \S \ref{Sec: 2nd proof}.\smallskip

In the second part of our note, \S\ref{Sec: c0 spread}, we shall be concerned with Banach spaces that admit $c_0$ spreading models. In order to motivate such investigation, let us remind that $K^s(X)=2$, whenever $X$ admit a spreading model isomorphic to $\ell_1$, \cite[Corollary 5.6]{HKR}. Since this result heavily depends on a version of James' non-distortion theorem for spreading models, which is also available for Banach spaces with $c_0$ spreading models (compare \cite[Proposition II.2.4]{BeaLa} with \cite[Lemma III.2.4]{BeaLa}), it was temptful to conjecture that $K^s(X)=2$ when $X$ admits a $c_0$ spreading model. On the other hand, the prototypical example of a symmetrically $2$-separated sequence in $c_0$ is $-e_{n+1}+\sum_{j=1}^n e_j$ ($n\in\N$), which, in light of the increasing supports of the vectors, can not be transferred through a spreading model.\smallskip

In the second main result of this note we show that, indeed, the problem with vectors having `long support' can not be circumvented. The result, whose formal statement in given in Theorem B below, solves in the negative \cite[Problem 5.11]{HKR} in a very strong way.
\begin{theoremB} For every $\e>0$ there exists a Banach space $X$ every whose spreading model is isomorphic to $c_0$ and such that $K(X)\leq1+\e$.
\end{theoremB}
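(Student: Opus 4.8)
The plan is to take $X:=(T^*)^{(p)}$, the $p$-convexification of the predual $T^*$ of the Tsirelson space $T$ of Figiel and Johnson, for $p$ large enough that $2^{1/p}\leq 1+\e$. Recall that $T^*$ is reflexive, has a $1$-unconditional shrinking basis $(e_n)$, and contains no copy of $c_0$ or of any $\ell_q$; the norm of $T$ obeys the implicit equation $\|g\|_T=\max\{\|g\|_\infty,\,\tfrac12\sup\sum_{i=1}^k\|E_ig\|_T\}$ over admissible families $E_1<\dots<E_k$ (that is, $k\leq\min E_1$). Convexifying serves two purposes: it brings the Kottman constant close to $1$ — note $K(T^*)=2$, witnessed already by $\|e_1^*\pm e_n^*\|_{T^*}=2$ — while it does not alter the isomorphic type of any spreading model, since $c_0$ is its own $p$-convexification.

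For the Kottman estimate, given a normalized sequence in $X$ I would first reduce to a block sequence: $X$ is reflexive with a shrinking basis, so after passing to a subsequence we may write $u_k=x+z_k+o(1)$, where $x$ is the weak limit (taken finitely supported, up to perturbation) and $(z_k)$ is a block sequence whose supports tend to infinity; since $x$ and $z_k$ are eventually disjointly supported, $1$-unconditionality of $T^*$ gives $\|z_k\|_X\leq\|x\oplus z_k\|_X\leq 1+o(1)$. For two disjointly supported blocks $z,z'$ one has, coordinatewise, $|z-z'|^p=|z|^p+|z'|^p$, hence
$$\|z-z'\|_X^p=\bigl\||z|^p+|z'|^p\bigr\|_{T^*}\leq\bigl\||z|^p\bigr\|_{T^*}+\bigl\||z'|^p\bigr\|_{T^*}=\|z\|_X^p+\|z'\|_X^p\leq 2+o(1),$$
so $\|u_k-u_l\|_X\leq 2^{1/p}+o(1)\leq 1+\e$ for all large $k\neq l$. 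Thus $K(X)\leq 1+\e$ (hence also $K^s(X)\leq 1+\e$), and in particular $X$ contains no copy of $c_0$.

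It remains to verify that every spreading model of $X$ is isomorphic to $c_0$, which I would deduce from the corresponding statement for $T^*$. If $(x_k)$ is a normalized block basis of $T^*$ with fast enough growing supports, then for indices $k_1<\dots<k_j$ with $j\leq\min\operatorname{supp}x_{k_1}$ the sets $\operatorname{supp}x_{k_1}<\dots<\operatorname{supp}x_{k_j}$ form an admissible family for $T$, so for every $g\in B_T$ one has $\sum_i\|g|_{\operatorname{supp}x_{k_i}}\|_T\leq 2\|g\|_T$, and therefore
$$\Bigl\|\sum_i a_ix_{k_i}\Bigr\|_{T^*}=\sup_{g\in B_T}\sum_i a_i\langle x_{k_i},g\rangle\leq\max_i|a_i|\,\sup_{g\in B_T}\sum_i\|g|_{\operatorname{supp}x_{k_i}}\|_T\leq 2\max_i|a_i|;$$
since also $\|\sum_i a_ix_{k_i}\|_{T^*}\geq\max_i|a_i|$, the spreading model generated by $(x_k)$ is $2$-equivalent to the $c_0$-basis. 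A general normalized sequence in $T^*$, after a perturbation, has the form $x+(\text{block tail})$ with $x$ finitely supported; the extra ingredient is the cross estimate $\|cy\oplus w\|_{T^*}\leq 2\max(|c|\,\|y\|_{T^*},\|w\|_{T^*})$ for $y$ finitely supported and $w$ supported far to the right (again by splitting a norming $g\in B_T$ and using $1$-unconditionality of $T$), which shows that any such spreading model is equivalent to a norm of the shape $\max(\lambda|\sum a_i|,\mu\max_i|a_i|)$ on $c_{00}$ — and such a norm is isomorphic to $c_0$, since its restriction to the hyperplane $\{\sum a_i=0\}$ is the $\sup$-norm and that hyperplane is $\sup$-dense in $c_0$, so the completion is $c_0\oplus\R\cong c\cong c_0$. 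Finally, the substitution $v_k:=|u_k|^p$ sends a normalized block basis $(u_k)$ of $X$ to one of $T^*$ and identifies the spreading model of $(u_k)$ with the $p$-convexification of the spreading model of $(v_k)$; as the $p$-convexification of a space whose (spreading, hence $1$-subsymmetric) basis is $C$-equivalent to the $c_0$-basis has basis $C^{1/p}$-equivalent to the $c_0$-basis, every spreading model of $X$ is isomorphic to $c_0$.

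The step I expect to be delicate is the claim that $T^*$ admits no spreading model other than $c_0$ — especially for sequences with "long supports" or a non-zero weak limit, which are only conditionally basic, so that the passage between spreading models of $X$ and of $T^*$ must be handled by hand rather than via the clean block-basis identity. This is precisely the difficulty foreshadowed by the sequence $-e_{n+1}+\sum_{j=1}^n e_j$ in $c_0$. A secondary technical point is the reduction to block sequences in the Kottman estimate when the weak limit does not vanish.
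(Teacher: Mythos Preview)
Your construction is correct, but the paper takes a more direct route: it simply uses the original Tsirelson space $T^*_\theta$ with the parameter $\theta\in(1/2,1)$ pushed close to $1$. That every spreading model of $T^*_\theta$ is isomorphic to $c_0$ is a classical fact (cited from Beauzamy--Laprest\'e), so no work is needed there; and a sliding-hump argument using reflexivity, the suppression $1$-unconditionality of the basis, and the upper $c_0$-estimate $\bigl\|\sum_i\alpha_iu_{j_i}\bigr\|_{T^*_\theta}\leq\theta^{-1}\max_i|\alpha_i|$ for far-enough blocks yields $K(T^*_\theta)=1/\theta$ directly. No convexification is involved.

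What both approaches are really exploiting is isolated in the paper as a separate result: if $X$ is reflexive, $\lambda$-asymptotic $c_0$, and has the non-strict Opial property (in particular, if it has a suppression $1$-unconditional basis), then $K(X)\leq\lambda$. Your space $(T^*)^{(p)}$ satisfies this with $\lambda=2^{1/p}$; the paper's $T^*_\theta$ with $\lambda=1/\theta$. The paper's choice buys simplicity: the spreading-model statement is off the shelf. Your choice costs the extra verification you sketched, and as you correctly diagnosed, the delicate point is sequences in $X$ with non-zero weak limit --- note that your paragraph on the $\max(\lambda|\sum a_i|,\mu\max_i|a_i|)$ form is written for $T^*$, not for $X=(T^*)^{(p)}$, so one more layer is needed. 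It is fillable by the same method: for $u_k=x+z_k$ with $x$ and the $z_{k_i}$ disjointly supported, $\bigl\|\sum_ia_iu_{k_i}\bigr\|_X^p=\bigl\||\sum_ia_i|^p|x|^p+\sum_i|a_i|^p|z_{k_i}|^p\bigr\|_{T^*}$, and suppression $1$-unconditionality below together with the triangle inequality and the block $c_0$-estimate above give exactly the $\max(\lambda|\sum a_i|,\mu\max_i|a_i|)$ equivalence. Incidentally, your cross-estimate $\|cy\oplus w\|_{T^*}\leq2\max(|c|\|y\|_{T^*},\|w\|_{T^*})$ holds without any restriction on $\min\operatorname{supp}y$, since $\|g|_E\|_T+\|g|_F\|_T\leq 2\|g\|_T$ follows already from suppression $1$-unconditionality of $T$ for any disjoint $E,F$.
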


In conclusion to this section, let us mention that our notation in this note is standard and follows, \emph{e.g.}, \cite{ak}. Let us just remind here (see, \emph{e.g.}, \cite[p.~53]{ak}) that an unconditional basic sequence $(e_j)_{j=1}^\infty$ in a Banach space $X$ is \emph{suppression $1$-unconditional} if for every finite subset $A$ of $\N$ one has
$$\left\|\sum_{j\in A}\alpha_je_j\right\|\leq \left\|\sum_{j\in\N}\alpha_j e_j\right\|\qquad\left((\alpha_j)_{j=1}^\infty\in c_{00}\right).$$

\section{A symmetric version of the Elton--Odell theorem}\label{Sec: Ks and K}
\begin{proof}[Proof of Theorem A] We can assume that $X$ contains no copy of $\ell_1$, since otherwise $K^s(X)=2$, in light of James' non-distortion theorem \cite{james-distortion}. Therefore Rosenthal's $\ell_1$ theorem \cite{Rosenthal l1} yields the existence of a weakly null normalised sequence $(x_j)_{j=1}^\infty$ in $X$. Due to the Bessaga--Pe\l czy\'nski selection principle (see, \emph{e.g.}, \cite[Proposition 1.5.4]{ak}), we can additionally assume that $(x_j)_{j=1}^\infty$ is a basic sequence with basis constant less than $4/3$.

By James' non-distortion theorem, we may also assume that $X$ contains no copy of $c_0$. Consequently, the proof of the Elton--Odell theorem (see \cite[Remarks.~(1)]{E-O}) yields the existence of a normalised block sequence $(u_j)_{j=1}^\infty$ of $(x_j)_{j=1}^\infty$, which is a $(1+\e)$-separated sequence, for some $\e>0$.\smallskip

Let us now fix a parameter $\eta>0$ with $\eta<\min\{\e,1/2\}$ and consider the colouring $c$ of $[\N]^2$ given by: $$\{n,k\}\mapsto\begin{cases} (>) & \text{if }\; \|u_n +u_k\|>1+\eta \\ (\leq) & \text{if }\; \|u_n +u_k\|\leq1+\eta.\end{cases}$$
By Ramsey's theorem \cite{Ramsey}, we can select an infinite monochromatic subset $M$ of $\N$, \emph{i.e.}, such that $c$ is constant on $[M]^2$. In the case where the colour of every pair $\{n,k\}\in M$ is $(>)$, then $(u_j)_{j\in M}$ is evidently a symmetrically $(1+\eta)$-separated sequence, and we are done.

In the other case, up to passing to a subsequence, we can assume that $\|u_n+u_k\|\leq1+\eta$ for distinct $n,k\in\N$; therefore the vectors
$$y_j:=\frac{u_1+ u_{j+1}}{1+\eta}\qquad (j\in\N)$$
belong to the unit ball of $X$. Finally, for distinct $n,k\in\N$ we have
$$\|y_n-y_k\|=\frac{1}{1+\eta}\|u_{n+1}-u_{k+1}\|\geq\frac{1+\e}{1+\eta}$$
and (exploiting that the basis constant of $(u_j)_{j=1}^\infty$ is less than $4/3$)
$$\|y_n+y_k\|=\frac{1}{1+\eta}\|2u_1+u_{n+1}+u_{k+1}\|\geq\frac{3}{4(1+\eta)} \|2u_1\|=\frac{3}{2(1+\eta)}.$$
Therefore, the sequence $(y_j)_{j=1}^\infty$ is symmetrically $r$-separated, where $r:=\min\left\{\frac{1+\e}{1+\eta}, \frac{3}{2(1+\eta)}\right\}>1$, and we are done.
\end{proof}

It is perhaps clear that the above argument admits a quantitative counterpart, upon choosing the parameters in the optimal way. More precisely, we have the following proposition.
\begin{proposition}\label{Prop sqrt} Assume that the unit ball of a Banach space $X$ contains a weakly null $(1+\e)$-separated sequence, where $\e\in(0,1)$. Then, the unit ball also contains a symmetrically $\sqrt{1+\e}$-separated sequence.
\end{proposition}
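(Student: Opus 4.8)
The plan is to follow the proof of Theorem~A with two refinements: take the threshold in the Ramsey colouring to be the geometric mean $\sqrt{1+\e}$ — the value at which the two alternatives of the dichotomy balance — and replace the crude bound coming from the basis constant by a lossless estimate based on weak nullity. Before starting, I would dispose of a trivial point: since $1+\e\leq\|x_n-x_k\|\leq\|x_n\|+\|x_k\|$ for the given weakly null $(1+\e)$-separated sequence $(x_j)_{j=1}^\infty\subseteq B_X$, at most one $x_j$ can have norm $<(1+\e)/2$, so after discarding it I may assume $\|x_j\|\geq(1+\e)/2$ for all $j$.

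Next, colour $[\N]^2$ by assigning to $\{n,k\}$ the type $(\geq)$ if $\|x_n+x_k\|\geq\sqrt{1+\e}$ and the type $(<)$ otherwise, and select an infinite monochromatic $M\subseteq\N$ by Ramsey's theorem; relabel $(x_j)_{j\in M}$ as $(x_j)_{j=1}^\infty$. If the type is $(\geq)$, then since also $\|x_n-x_k\|\geq 1+\e\geq\sqrt{1+\e}$ the sequence $(x_j)$ itself is symmetrically $\sqrt{1+\e}$-separated and we are done. If the type is $(<)$, fix $\delta>0$ and a functional $f\in S_{X^*}$ with $f(x_1)=\|x_1\|$; since $f(x_j)\to0$ I pass to a further subsequence that keeps $x_1$ and along which $|f(x_j)|<\delta$ for every $j\geq2$. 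The colour $(<)$ persists, so $\|x_1+x_{j+1}\|<\sqrt{1+\e}$ for all $j$ and the vectors $y_j:=(x_1+x_{j+1})/\sqrt{1+\e}$ lie in $B_X$. Their differences need no argument, $\|y_n-y_k\|=\|x_{n+1}-x_{k+1}\|/\sqrt{1+\e}\geq\sqrt{1+\e}$, while for the sums
$$\|2x_1+x_{n+1}+x_{k+1}\|\;\geq\;f\bigl(2x_1+x_{n+1}+x_{k+1}\bigr)\;=\;2\|x_1\|+f(x_{n+1})+f(x_{k+1})\;\geq\;2\|x_1\|-2\delta\;\geq\;1+\e-2\delta,$$
so that $\|y_n+y_k\|\geq(1+\e-2\delta)/\sqrt{1+\e}$. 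Thus $(y_j)$ is symmetrically $\bigl(\sqrt{1+\e}-2\delta/\sqrt{1+\e}\bigr)$-separated.

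In both alternatives the unit ball of $X$ therefore contains, for every $\delta>0$, a symmetrically $(\sqrt{1+\e}-\delta)$-separated sequence, which already yields $K^s(X)\geq\sqrt{1+\e}$; to obtain a single sequence realising the sharp constant one argues slightly more carefully in the $(<)$ alternative (e.g. choosing $\delta$ below $\|x_1\|-(1+\e)/2$ when this quantity is positive, and exploiting the near-$\ell_1$ behaviour of $(x_j)$ in the borderline case $\|x_1\|=(1+\e)/2$), but I would not expect this to be the delicate part.

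The step that I expect to demand the most thought is precisely the sum estimate in the $(<)$ case. In Theorem~A one could afford the factor $3/4$ lost to the basis constant because only $K^s(X)>1$ was at stake; here every bit of separation must be preserved, and it is the norming functional $f$ — equivalently, lower weak semicontinuity of the norm together with weak nullity — that supplies it. This is also why $x_1$ has to be frozen before all the extractions, so that one and the same $f$ controls the sums $x_1+x_{j+1}$ simultaneously for every $j$. Finally, the square root in the statement is forced by the shape of the dichotomy: the $(\geq)$ alternative is governed by the threshold $t$ and the $(<)$ alternative by $(1+\e)/t$, and these two bounds agree exactly when $t=\sqrt{1+\e}$.
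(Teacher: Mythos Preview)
Your argument follows the paper's Ramsey-dichotomy scheme with the identical threshold $1+\eta=\sqrt{1+\e}$; the only substantive deviation is in the sum estimate for the $(<)$ alternative. The paper passes (via Bessaga--Pe\l czy\'nski) to a basic subsequence with basis constant $K_b<2/(1+\e)$ and then uses the first basis projection to obtain $\|2u_1+u_{n+1}+u_{k+1}\|\geq 2\|u_1\|/K_b>(1+\e)\|u_1\|$, whereas you use a norming functional for the anchor together with weak nullity to obtain $\|2x_1+x_{n+1}+x_{k+1}\|\geq 2\|x_1\|-2\delta$. Both devices encode the same fact---the weakly null tail cannot cancel $2x_1$---and both deliver $K^s(X)\geq\sqrt{1+\e}$, which is exactly the reformulation $K^s(X)\geq\sqrt{\gamma_0(X)}$ used downstream; your route is a shade more elementary in that it bypasses the selection principle, while the paper's strict inequality $K_b<2/(1+\e)$ carries a little built-in slack that your $\delta$ plays the role of. Your caution about realising the exact constant in a single sequence is honest, but note that the paper's computation, read literally, has the same dependence on $\|u_1\|$ and does not address the borderline case either; the clean fix in both approaches is to rescale by $\sup_j\|x_j\|\leq1$ and choose the anchor (after the Ramsey extraction) with norm strictly above $(1+\e)/2$, rather than to appeal to ``near-$\ell_1$'' behaviour.
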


\begin{proof} Choose one such weakly null $(1+\e)$-separated sequence $(u_j)_{j=1}^\infty$ and, up to passing to a subsequence, assume that it is basic, with basis constant less than $2/(1+\e)$ (here, we use that $\e<1$). Then, choose $\eta:=\sqrt{1+\e}-1$ and apply Ramsey's theorem as above. 

In the case where the infinite set $M$ has colour $(>)$, the sequence $(u_j)_{j=1}^\infty$ is already symmetrically $\sqrt{1+\e}$-separated; in the other case, the sequence $(y_j)_{j=1}^\infty$ as in the previous proof is immediately seen to be symmetrically $\sqrt{1+\e}$-separated.
\end{proof}

The above proposition allows us to obtain an interesting estimate for $K^s(X)$ in terms of $K(X)$ for certain classes of Banach spaces. In order to explain this, we need some results from \cite{DORR} and \cite{MaPa}. In the paper \cite{DORR}, Dronka, Olszowy and Rybarska-Rusinek introduced a constant, denoted $\gamma_0(X)$, as follows:
$$\gamma_0(X):=\sup\left\{r\geq0\colon \exists\, (x_n)_{n=1}^\infty\subseteq B_X \text{ weakly null and }r\text{-separated} \right\}.$$
With this notation, we can restate Proposition \ref{Prop sqrt} as the validity of the inequality, for every Banach space $X$,
\begin{equation}\label{Ks^2 geq gamma0}
K^s(X)\geq\sqrt{\gamma_0(X)}.
\end{equation}

In the same paper, the authors show that $\gamma_0(X)=K(X)$ for every reflexive Banach space $X$ with a \emph{co-monotone} Schauder basis $(e_i)_{i=1}^\infty$, namely such that
$$\left\|\sum_{i=n}^\infty a_i e_i\right\|\leq\left\|\sum_{i=1}^\infty a_i e_i\right\|,$$
for every $n\in\N$ and $(a_i)_{i=1}^\infty\in c_{00}$.\smallskip

This result has been generalised by Maluta and Papini \cite{MaPa} to the class of reflexive Banach spaces with the non-strict Opial property. Let us recall that a Banach space $X$ has the \emph{non-strict Opial property} if for every sequence $(x_n)_{n=1}^\infty$ in $X$ with weak limit $x$ and every $y\in X$
$$\liminf_{n\to\infty}\|x_n-x\|\leq\liminf_{n\to\infty}\|x_n-y\|.$$

We refer to \cite[Theorem 4.1]{MaPa} for the proof that $K(X)=\gamma_0(X)$, for reflexive Banach spaces $X$ with the non-strict Opial property and to \cite[Proposition 4.2]{MaPa} where it is proved that this is, indeed, a generalisation of \cite{DORR}. Let us also observe that every suppression $1$-unconditional basis is obviously co-monotone. Combining these results with (\ref{Ks^2 geq gamma0}), we therefore arrive at the following corollary.
\begin{corollary}\label{Cor: refl Ks>K} Let $X$ be a reflexive Banach space with the non-strict Opial property. Then
$$K^s(X)\geq\sqrt{K(X)}.$$
\end{corollary}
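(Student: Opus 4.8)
The plan is to combine the two ingredients already assembled in the excerpt: the inequality $K^s(X)\geq\sqrt{\gamma_0(X)}$ from Proposition~\ref{Prop sqrt} (restated as \eqref{Ks^2 geq gamma0}), and the equality $\gamma_0(X)=K(X)$ valid for reflexive Banach spaces with the non-strict Opial property, which is \cite[Theorem 4.1]{MaPa}. Once both are in hand, the corollary is a one-line chain of inequalities.

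First I would invoke \cite[Theorem 4.1]{MaPa}: since $X$ is reflexive and has the non-strict Opial property, $\gamma_0(X)=K(X)$. Substituting this equality into \eqref{Ks^2 geq gamma0} immediately yields
$$K^s(X)\geq\sqrt{\gamma_0(X)}=\sqrt{K(X)},$$
which is exactly the assertion. One should, however, address a small technical point: Proposition~\ref{Prop sqrt} (hence \eqref{Ks^2 geq gamma0}) was stated for a weakly null $(1+\e)$-separated sequence with $\e\in(0,1)$, i.e.\ for separation constants strictly between $1$ and $2$. So if $\gamma_0(X)\leq1$ the claimed inequality $K^s(X)\geq\sqrt{\gamma_0(X)}$ is trivial because $K^s(X)\geq1$ always holds (by Riesz's lemma, every infinite-dimensional space contains a $1$-separated sequence, and a routine symmetrisation or the trivial bound gives $K^s(X)\geq1$; in fact Theorem~A gives the strict inequality). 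And if $\gamma_0(X)\geq2$ it must equal $2$, and then $K^s(X)=2\geq\sqrt 2$ follows from the James non-distortion machinery, or one simply notes $\sqrt{\gamma_0(X)}\leq\sqrt 2<2$. For $\gamma_0(X)\in(1,2)$ one approximates: for any $r<\gamma_0(X)$ with $r>1$, writing $r=1+\e$ with $\e\in(0,1)$, Proposition~\ref{Prop sqrt} produces a symmetrically $\sqrt{1+\e}$-separated sequence in $B_X$, whence $K^s(X)\geq\sqrt r$; letting $r\uparrow\gamma_0(X)$ gives $K^s(X)\geq\sqrt{\gamma_0(X)}$.

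I do not anticipate a serious obstacle here, since all the real work has been done in Proposition~\ref{Prop sqrt} and is imported wholesale from \cite{MaPa}; the only thing requiring care is the bookkeeping at the endpoints of the range of $\e$, handled as above. It is also worth remarking—as the excerpt does—that the hypothesis applies in particular to every reflexive space with a suppression $1$-unconditional (more generally, co-monotone) basis, via \cite[Proposition 4.2]{MaPa}, so the corollary specialises to the earlier result of Dronka, Olszowy and Rybarska-Rusinek \cite{DORR}.

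\begin{proof} If $\gamma_0(X)\leq1$ the inequality is trivial, since $K^s(X)\geq1$ for every infinite-dimensional Banach space; and if $\gamma_0(X)=2$ then $K^s(X)=2\geq\sqrt2=\sqrt{\gamma_0(X)}$. Assume therefore $\gamma_0(X)\in(1,2)$. For every $r\in(1,\gamma_0(X))$, write $r=1+\e$ with $\e\in(0,1)$; by definition of $\gamma_0(X)$ the unit ball of $X$ contains a weakly null $(1+\e)$-separated sequence, so Proposition~\ref{Prop sqrt} provides a symmetrically $\sqrt{1+\e}$-separated sequence in $B_X$, whence $K^s(X)\geq\sqrt{r}$. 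Letting $r\uparrow\gamma_0(X)$ gives $K^s(X)\geq\sqrt{\gamma_0(X)}$, that is, \eqref{Ks^2 geq gamma0}. Finally, since $X$ is reflexive and enjoys the non-strict Opial property, \cite[Theorem 4.1]{MaPa} yields $\gamma_0(X)=K(X)$, and therefore
$$K^s(X)\geq\sqrt{\gamma_0(X)}=\sqrt{K(X)}.\qedhere$$
\end{proof}
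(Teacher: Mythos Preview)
Your approach is exactly the paper's: combine \eqref{Ks^2 geq gamma0} with \cite[Theorem 4.1]{MaPa}. You are in fact more careful than the paper, which simply asserts \eqref{Ks^2 geq gamma0} ``for every Banach space'' without discussing the restriction $\e\in(0,1)$ in Proposition~\ref{Prop sqrt}. One small point: in your formal proof the claim ``if $\gamma_0(X)=2$ then $K^s(X)=2$'' is not justified (James' non-distortion does not obviously apply), but this case is unnecessary---your approximation argument for $\gamma_0(X)\in(1,2)$ works verbatim for $\gamma_0(X)\in(1,2]$, since you only take $r<\gamma_0(X)$ anyway.
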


It is a classical result of James \cite{james-bases} that a Banach space with unconditional basis is reflexive, provided it contains no copies of $c_0$ or $\ell_1$. Since for Banach spaces containing a copy of $c_0$ or $\ell_1$ the conclusion of the above corollary is obviously true, we also obtain the following result.
\begin{corollary} Let $X$ be a Banach space with a suppression $1$-unconditional Schauder basis. Then
$$K^s(X)\geq\sqrt{K(X)}.$$
\end{corollary}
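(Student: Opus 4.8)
The plan is to reduce the statement for a space $X$ with a suppression $1$-unconditional Schauder basis to the previous corollary, Corollary \ref{Cor: refl Ks>K}, which applies to reflexive spaces with the non-strict Opial property. First I would dispose of the trivial cases: if $X$ contains an isomorphic copy of $c_0$ or $\ell_1$, then $K^s(X)=2$ by James' non-distortion theorem, and since $K(X)\leq2$ always holds, the desired inequality $K^s(X)\geq\sqrt{K(X)}$ is automatic. So I may assume that $X$ contains no copy of $c_0$ or $\ell_1$.

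Next I would invoke James' theorem \cite{james-bases}: a Banach space with an unconditional basis which contains no copy of $c_0$ or $\ell_1$ is reflexive. Hence $X$ is reflexive. It then remains to check that $X$ enjoys the non-strict Opial property, so that Corollary \ref{Cor: refl Ks>K} can be applied verbatim. For this I would use the hypothesis that the basis is suppression $1$-unconditional. Given a sequence $(x_n)$ with $x_n\xrightarrow{w}x$ and an arbitrary $y\in X$, I would expand $y=\sum_i b_i e_i$ and argue (passing to a subsequence that realises the relevant $\liminf$ and using the coordinatewise convergence of a weakly null perturbation of $(x_n-x)$) that the tail of $y$ can be made arbitrarily small while the suppression property ensures $\|x_n-x\|\leq\|x_n-x+(\text{a finitely supported vector})\|$ up to a small error; letting that finitely supported vector approximate $y-x$ yields $\liminf_n\|x_n-x\|\leq\liminf_n\|x_n-y\|$. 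Alternatively, and more cleanly, I would simply note that a suppression $1$-unconditional basis is co-monotone (as observed in the excerpt, and as the paper explicitly remarks just before the corollary), so that the Maluta--Papini result \cite[Theorem 4.1]{MaPa}, or already the Dronka--Olszowy--Rybarska-Rusinek result \cite{DORR}, applies: for reflexive $X$ with a co-monotone basis one has $K(X)=\gamma_0(X)$, whence by \eqref{Ks^2 geq gamma0} we get $K^s(X)\geq\sqrt{\gamma_0(X)}=\sqrt{K(X)}$.

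I expect the main (and only real) obstacle to be verifying that the non-strict Opial property, or equivalently that $\gamma_0(X)=K(X)$, genuinely holds for \emph{every} suppression $1$-unconditional basis and not merely for the monotone unconditional case; but this is exactly what co-monotonicity delivers, since suppressing the first $n-1$ coordinates is an instance of suppressing a finite set, and the suppression $1$-unconditional inequality with $A=\{n,n+1,\dots\}\cap\operatorname{supp}(a_ie_i)$ gives precisely $\|\sum_{i\geq n}a_ie_i\|\leq\|\sum_i a_ie_i\|$. Once co-monotonicity is in hand the argument is a direct concatenation of: (i) James' reflexivity theorem, (ii) the chain $K^s(X)\geq\sqrt{\gamma_0(X)}=\sqrt{K(X)}$, with the trivial $c_0$/$\ell_1$ cases handled separately at the outset.
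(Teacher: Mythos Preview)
Your proposal is correct and follows essentially the same route as the paper: handle the $c_0$/$\ell_1$ cases trivially via James' non-distortion theorem, apply James' reflexivity theorem in the remaining case, and then use that suppression $1$-unconditionality implies co-monotonicity to invoke either \cite{DORR} or Corollary~\ref{Cor: refl Ks>K} together with \eqref{Ks^2 geq gamma0}. The paper's argument is exactly this concatenation, with the co-monotonicity observation made explicitly just before the statement.
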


In conclusion to this section, let us notice that the two above corollaries leave plenty of space for further investigation on the comparison between $K^s(X)$ and $K(X)$, for a given infinite-dimensional Banach space. In particular, we don't know whether the estimate contained in the conclusion to Corollary \ref{Cor: refl Ks>K} is sharp. Moreover, it would be interesting to know how large the gap $K(X)-K^s(X)$ could be; for example, what is the minimal possible value for $K^s(X)$ for a Banach space $X$ such that $K(X)=2$?

\section{A second proof of Theorem A}\label{Sec: 2nd proof}
In this short part, we shall give a second proof of Theorem A, in the language of spreading models. Very recently, Freeman, Odell, Sari, and Zheng \cite{FOSZ} have distilled a sufficient condition for a Banach space to contain $c_0$, which readily implies the Elton--Odell theorem; we shall show below how to directly derive the symmetric version of the theorem from their result. Since such proof is actually just a variation of the argument above, we shall only sketch it. Let us start by stating the result from \cite{FOSZ} that we need; let us refer to \cite{BeaLa} or \cite{Odell-stability} for basic notions on spreading models.
\begin{theorem}[{\cite[Theorem 4.1]{FOSZ}}] Let $(x_j)_{j=1}^\infty$ be a normalised, weakly null basis for a Banach space $X$ that contains no copy of $\ell_1$. Assume that whenever $(y_j)_{j=1}^\infty$ is a normalised, weakly null block sequence of $(x_j)_{j=1}^\infty$ with spreading model $(e_j)_{j=1}^\infty$ one has $\|e_1-e_2\|=1$. Then $X$ contains a copy of $c_0$.
\end{theorem}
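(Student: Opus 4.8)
The plan is to prove the contrapositive: granting that $(x_j)$ is normalised, weakly null and that $X$ contains no copy of $\ell_1$ \emph{nor of $c_0$}, I want to exhibit a normalised weakly null block sequence of $(x_j)$ whose spreading model $(e_j)$ has $\|e_1-e_2\|>1$. Two standing facts frame the problem. First, spreading models of weakly null sequences are $1$-spreading and suppression-$1$-unconditional, so every spreading model $(e_j)$ of a block sequence automatically satisfies $\|e_i-e_j\|\geq\|e_1\|=1$; the whole issue is to beat this for one well-chosen block sequence. Second, the absence of $c_0$ is equivalent --- via James' characterisation together with the Bessaga--Pe\l czy\'nski selection principle --- to the statement that \emph{every} normalised weakly null block sequence $(u_j)$ of $(x_j)$ fails a uniform upper-$c_0$ estimate: for each $C$ there are $N\in\N$ and signs $(\e_i)_{i=1}^N$ with $\|\sum_{i=1}^N\e_iu_i\|>C$.

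There is a short route that clarifies the logic but is circular in spirit: the block-sequence refinement of the Elton--Odell theorem (as in \cite[Remarks (1)]{E-O}) already produces, from the absence of $c_0$, a normalised block sequence $(u_j)$ of $(x_j)$ which is $(1+\e)$-separated for some $\e>0$; passing to a subsequence that generates a spreading model and observing $\|e_1-e_2\|=\lim_{n<m}\|u_n-u_m\|\geq 1+\e$ finishes the contrapositive. Since the point of the present theorem is precisely that it should \emph{imply} Elton--Odell, what is really needed is a self-contained argument, and one must be a little careful: the naive greedy construction --- build a block sequence $(z_n)$ that is $(1+\delta)$-equivalent to the $c_0$-basis by repeatedly adjoining a far-out normalised block $z_n$ to the sign-combinations of $z_1,\dots,z_{n-1}$ --- cannot be carried out by invoking $\|e_1-e_2\|=1$ for $(x_j)$ alone, because a far-out weakly null vector can in general add length to a fixed vector (there is a priori no Opial-type inequality to prevent this), so controlling $\|\sum_{i<n}\theta_iz_i\pm z_n\|$ requires that the earlier structure has itself been arranged to be $c_0$-flat at every relevant scale.

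The self-contained argument should therefore feed the hypothesis back into ever more complicated block sequences. One natural device is the iteration of $2$-term differences: starting from a normalised weakly null block sequence $(u^0_j)$ with spreading model $(e^0_j)$, the hypothesis forces $\|e^0_1-e^0_2\|=1$, so after passing to a subsequence along which $\|u^0_{2j-1}-u^0_{2j}\|\to1$ the vectors $u^1_j:=u^0_{2j-1}-u^0_{2j}$ form a normalised weakly null block sequence with spreading model $(e^1_j)$, again satisfying $\|e^1_1-e^1_2\|=1$; iterating, one obtains at \emph{every} scale $k$ that the relevant iterated $\pm$-combinations of the $u^0_j$ behave isometrically like the $c_0$-basis in the two-term norm. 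Against this stands the failure of the upper-$c_0$ estimate, which applies to the diagonal block sequence one extracts \emph{and} to all of its block sequences, forcing sign-sums to blow up in longer sums. I would exploit this tension by a Ramsey/Schreier-type selection --- choosing coherently across the scales which lengths and sign patterns witness the failure of the estimate --- to assemble, inside a single block subspace, a nested family of $(1+\delta_n)$-isomorphic copies of $c_0^n$ with $\prod_n(1+\delta_n)<\infty$; a diagonalisation over $n$ then yields an isomorphic copy of $c_0$, the desired contradiction.

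The step I expect to be the main obstacle is exactly this coherent combinatorial harvesting: upgrading the qualitative input ("for every $C$, \emph{some} sign-sum of \emph{some} length beats $C$, uniformly over block sequences and at every scale") to the quantitative, subsequence-stable output (a coherent tower of $(1+\delta_n)$-copies of $c_0^n$ inside one block subspace), while ensuring the estimates survive the passages to block sequences and to spreading models; controlling exactly which sign patterns and lengths are used, compatibly with all these passages, is the heart of the matter, and it is in essence the combinatorial content of the Elton--Odell theorem in new clothing, so I would either isolate and reprove the needed selection lemma or invoke a Ramsey-type lemma in the style of \cite{E-O}. A secondary, purely technical point is the bookkeeping of normalisation and basis constants through the infinite iteration, handled as usual by diagonalising and only ever passing to subsequences so that any fixed finite block is perturbed finitely often. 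Finally, it is worth stressing that neither part of the hypothesis is dispensable: a suppression-$1$-unconditional, $1$-spreading sequence may well have $\|e_1-e_2\|=1$ yet $\|e_1+e_2\|>1$, and Tsirelson-type spaces (\emph{cf.}\ Theorem~B) admit \emph{all} their spreading models isomorphic to $c_0$ while containing no copy of $c_0$; so both the strength "for every block sequence" and the exact equality $\|e_1-e_2\|=1$ must be used essentially --- in the scheme above, it is precisely the iteration over scales that uses them.
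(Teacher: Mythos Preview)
The paper does not prove this theorem: it is quoted verbatim from \cite[Theorem~4.1]{FOSZ} and used as a black box in the second proof of Theorem~A. There is therefore no ``paper's own proof'' to compare your proposal against.

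On the substance of your sketch: your diagnosis of the logical structure is accurate --- invoking the block-sequence form of Elton--Odell would indeed be circular, since \cite{FOSZ} present this result precisely as a conceptual refinement from which Elton--Odell follows --- and your identification of the core difficulty (upgrading the qualitative failure of an upper-$c_0$ estimate, valid for every block sequence, to a coherent tower of near-isometric $\ell_\infty^n$ copies via a Ramsey/Schreier-type selection) is on target. However, what you have written is an outline of a strategy together with a candid list of the obstacles, not a proof: the ``coherent combinatorial harvesting'' step that you flag as the main obstacle is exactly where the entire content lies, and you have not supplied the selection lemma or the quantitative bookkeeping that would carry it out. In particular, the iterated $2$-term difference construction you describe produces, at each finite stage, only the information $\|e^k_1-e^k_2\|=1$; turning this into a genuine $c_0$-embedding requires controlling \emph{all} sign patterns of growing length simultaneously, and your text does not explain how the iteration over scales achieves that control. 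If you want a self-contained argument, you should consult \cite{FOSZ} directly, where the proof proceeds via asymptotic models and a careful Ramsey-type stabilisation rather than the bare difference iteration you sketch.
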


\begin{proof}[Second proof of Theorem A] By James' non-distortion theorem, we can assume that $X$ contains no copies of $c_0$ or $\ell_1$. Consequently, up to passing to a subspace, we may assume that $X$ admits a normalised, weakly null Schauder basis $(x_j)_{j=1}^\infty$. According to \cite[Theorem 4.1]{FOSZ}, we therefore derive the existence of a normalised, weakly null block basic sequence $(y_j)_{j=1}^\infty$ of $(x_j)_{j=1}^\infty$ that admits a spreading model $(e_j)_{j=1}^\infty$ satisfying $\|e_1-e_2\|>1$. Indeed, $(y_j)_{j=1}^\infty$ being weakly null, $(e_j)_{j=1}^\infty$ is suppression $1$-unconditional (see, \emph{e.g.}, \cite[Proposition I.5.1]{BeaLa}), whence $\|e_1-e_2\|\geq \|e_1\|=1$.\smallskip

We now distinguish two cases: if $\|e_1+e_2\|>1$, it readily follows from the definition of spreading models that there exists $k\in\N$ such that $(y_j)_{j=1}^\infty$ is symmetrically $(1+\e)$-separated, for some $\e>0$.

In the other case, where $\|e_1+e_2\|=1$, let $\e>0$ be such that $\|e_1-e_2\|\geq1+\e$ and consider the vectors $f_j:=e_1+e_{j+1}$ ($j\in\{1,2\}$). Evidently, $\|f_1\|=1$, $\|f_1-f_2\|=\|e_1-e_2\| \geq1+\e$ and $\|f_1+f_2\|=\|2e_1+e_2+e_3\|\geq2$, by the suppression $1$-unconditionality. Let us now fix a small parameter $\eta>0$; the above inequalities imply, up to discarding finitely many terms from the sequence $(y_j)_{j=1}^\infty$, that $\|y_1+y_{j+1}\|\leq1+\eta$, $\|(y_1+y_{j+1})-(y_1+y_{k+1})\|\geq1+\e-\eta$ and $\|(y_1+y_{j+1})+(y_1+y_{k+1})\|\geq2-\eta$, for distinct $j,k\in\N$. Therefore, the vectors
$$u_j:=\frac{y_1+y_{j+1}}{1+\eta}\qquad (j\in\N)$$
constitute the desired symmetrically separated sequence in $B_X$, with separation at least $\frac{1+\e-\eta}{1+\eta}>1$ (provided $\eta$ is chosen sufficiently small).
\end{proof}

\section{$c_0$ spreading models}\label{Sec: c0 spread}
The main goal of the present section is to show that the assumption on a Banach space $X$ to admit a spreading model isomorphic to $c_0$ is not sufficient to imply any estimate on its Kottman's constant. In particular, we will prove Theorem B and we will give one its generalisation, in the form of an upper estimate for the Kottman's constant of a class of \emph{asymptotically}-$c_0$ Banach spaces.\smallskip

Let us start by briefly reminding the definition and basic properties of Tsirelson's space, \cite{Tsirelson, FiJo} (also see, \emph{e.g.}, \cite{CaSh}). If $E$ and $F$ are finite subsets of $\N$ we write $E<F$ as a shorthand for $\max E< \min F$; in the case where $E=\{k\}$ is a singleton, we write $k<F$ in place of $\{k\}<F$. Analogous meaning is given to the expressions $E\leq F$, or $k\leq F$. Moreover, for a vector $x\in c_{00}$ and a finite subset $E$ of $\N$, we shall denote by $Ex$ the vector $(\chi_E(j)\cdot x(j))_{j=1}^\infty$.
\begin{definition}[\cite{FiJo}] Let $\theta\in(0,1)$ and denote by $\n_{T_\theta}$ the unique norm on $c_{00}$ that satisfies the following equation
$$\|x\|_{T_\theta}=\max\left\{\|x\|_\infty, \theta\sup \left\{\sum_{j=1}^k \|E_jx\|_{T_\theta} \colon k\in\N, k\leq E_1<\dots<E_k \right\} \right\}\qquad (x\in c_{00}).$$
\emph{Tsirelson's space} $T_\theta$ is the completion of $(c_{00},\n_{T_\theta})$.
\end{definition}

It follows easily from the definition that the canonical basis $(e_j)_{j=1}^\infty$ is a suppression $1$-unconditional basis for $T_\theta$. It is also clear that for every normalised block sequence $(u_j)_{j=1}^\infty$ of the canonical basis and for every $k\in\N$ there exists a finite subsequence $u_{j_1},\dots,u_{j_k}$ such that
$$\theta\cdot\sum_{i=1}^k|\alpha_i|\leq \left\|\sum_{i=1}^k \alpha_i u_{j_i}\right\|_{T_\theta} \leq\sum_{i=1}^k|\alpha_i|, \qquad(\alpha_1,\dots,\alpha_k\in\R).$$

We will be interested in the dual to $T_\theta$, nowadays known as the \emph{original Tsirelson's space} and denoted $T^*_\theta$. Such Banach space  was constructed by Tsirelson in \cite{Tsirelson} as the first example of an infinite-dimensional Banach space that contains no copy of $c_0$ or $\ell_p$ ($1\leq p<\infty$).

Standard duality arguments prove that the biorthogonal functionals $(e^*_j)_{j=1}^\infty$ to $(e_j)_{j=1}^\infty$ constitute a suppression $1$-unconditional Schauder basis for $T^*_\theta$. Moreover, for every normalised block sequence $(u_j)_{j=1}^\infty$ of the canonical basis and for every $k\in\N$ there exists a finite subsequence $u_{j_1},\dots,u_{j_k}$ such that
\begin{equation}\label{eq: c0 in T*}
\max_{i=1,\dots,k}|\alpha_i|\leq \left\|\sum_{i=1}^k \alpha_i u_{j_i}\right\|_{T^*_\theta} \leq\frac{1}{\theta} \max_{i=1,\dots,k}|\alpha_i|, \qquad(\alpha_1,\dots,\alpha_k\in\R).    
\end{equation}

An easy deduction from the above property and the Bessaga--Pe\l czy\'nski selection principle is the well known fact that every spreading model of $T^*_\theta$ is isomorphic to $c_0$ (see, \emph{e.g.}, \cite[p.~121]{BeaLa}). Equally well-know is that $T^*_\theta$ is a reflexive Banach space. For a discussion of all such properties, we refer to Chapter I in the aforementioned monograph \cite{CaSh}.\smallskip

We shall now compute the (symmetric) Kottman's constant of $T^*_\theta$.

\begin{theorem}\label{Ks for T*} Let $\theta\in(0,1)$ and $T^*_\theta$ be the original Tsirelson's space defined above. Then
$$K(T^*_\theta)=K^s(T^*_\theta)=\begin{cases}2 & \text{if }\theta\in(0,1/2]\\ 1/\theta & \text{if }\theta\in(1/2,1). \end{cases}$$
\end{theorem}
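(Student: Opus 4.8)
The plan is to compute the two constants by combining an upper estimate valid for the whole space with a matching construction of (symmetrically) separated sequences; since $T^*_\theta$ is reflexive with a suppression $1$-unconditional basis, Corollary~\ref{Cor: refl Ks>K} guarantees $K^s(T^*_\theta)\geq\sqrt{K(T^*_\theta)}$, but that bound is too weak here, so I would instead build the symmetrically separated sequences by hand and obtain $K^s=K$ directly from the estimates.

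\textit{Upper bound.} First I would show $K(T^*_\theta)\leq\max\{2,1/\theta\}$, or rather the precise values claimed, by a James-type non-distortion argument for the asymptotic structure. The key point is that in $T^*_\theta$, any normalised block sequence $(u_j)$ has the property \eqref{eq: c0 in T*}: finitely many well-separated-in-index members span an $\ell_\infty^k$ with constant $1/\theta$. More is true and is what I need: I expect one can show that for \emph{any} normalised weakly null sequence $(x_j)$, after passing to a block-like perturbation, a suitable average or a suitable pair satisfies $\|x_n - x_m\| \leq \max\{1,1/\theta\cdot(\text{something})\}$; concretely, for $\theta\in(1/2,1)$ one wants $\|x_n\pm x_m\|\leq 1/\theta + o(1)$ for an appropriate infinite subset, and for $\theta\in(0,1/2]$ the trivial bound $\|x_n\pm x_m\|\leq 2$ is already sharp. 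Since Kottman's constant of an infinite-dimensional space is always computed along weakly null sequences when the space is reflexive, this kind of estimate (via the Tsirelson-norm recursion applied to a block sequence, controlling $\|E_j(u_n\pm u_m)\|$ by splitting each $E_j$ according to which of the two supports it meets) would cap $K(T^*_\theta)$, hence also $K^s(T^*_\theta)$ since $K^s\leq K$ always, at the claimed value. The main obstacle I anticipate is exactly this upper estimate in the range $\theta>1/2$: one must rule out that long, spread-out combinations (the $c_0$-like vectors $-u_{k+1}+\sum_{j\leq k}u_j$) give separation closer to $2$; this requires genuinely using that in $T^*_\theta$ the admissibility constraint $k\leq E_1<\dots<E_k$ forces the number of blocks one may use to be small relative to how far out the support sits, so those long combinations have norm essentially $1/\theta$ rather than $2$.

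\textit{Lower bound.} For the matching construction, when $\theta\in(0,1/2]$ I would exhibit a symmetrically $2$-separated sequence, e.g.\ using a block sequence $(u_j)$ with $\|\sum \alpha_i u_{j_i}\|\approx\max|\alpha_i|$ on short stretches and then a $c_0$-style sequence $v_n = -u_{j_{n+1}} + \sum_{i\leq n} u_{j_i}$ normalised; the inequality $\|v_n\pm v_m\|\geq 2 - o(1)$ should follow because $\theta\leq 1/2$ makes the relevant $\ell_1$-type lower estimate on the support of $v_n+v_m$ or $v_n - v_m$ kick in at the right scale (two disjointly supported blocks of the normalising combinations). For $\theta\in(1/2,1)$, I would take a normalised block sequence $(u_j)$ and, using \eqref{eq: c0 in T*}, pass to a subsequence with $\|\sum_{i=1}^k\alpha_i u_{j_i}\|_{T^*_\theta}$ between $\max|\alpha_i|$ and $\tfrac1\theta\max|\alpha_i|$; then $\|u_{j_n}\pm u_{j_m}\|\leq 1/\theta$ shows we cannot do better than $1/\theta$ with these, but I need separation $\geq 1/\theta - \e$, which means showing the upper estimate in \eqref{eq: c0 in T*} is attained, i.e.\ there genuinely exist indices with $\|u_{j_n}+u_{j_m}\|$ and $\|u_{j_n}-u_{j_m}\|$ both close to $1/\theta$. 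This is where the admissibility structure is used constructively: choosing supports with $j_n, j_m$ large enough that a single admissible family of blocks can "see" both, one gets the $\tfrac1\theta(\|E_1(u_{j_n}\pm u_{j_m})\|)$ term realising a value near $1/\theta$.

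\textit{Assembling.} Finally I would note that the lower and upper bounds coincide and that, in each regime, the constructed separated sequence is in fact symmetrically separated (the estimates above are stated for both $+$ and $-$), giving $K(T^*_\theta)=K^s(T^*_\theta)$ with the stated value; for $\theta\in(0,1/2]$ one can alternatively just quote that such Tsirelson-type spaces contain $\ell_1^k$'s uniformly, but the explicit construction is cleaner and self-contained. The delicate computations are the two-sided norm estimates for $u_n\pm u_m$ inside the Tsirelson recursion, which I would carry out by a single induction on the recursive definition of $\n_{T^*_\theta}$ (equivalently, by duality, on $\n_{T_\theta}$), bounding each admissible sum $\sum_j\|E_j x\|$ after partitioning the $E_j$'s into those meeting only one of the two relevant supports and those meeting both.
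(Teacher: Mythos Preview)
Your upper-bound strategy is in the right spirit and close to the paper's: pass to a weakly convergent subsequence by reflexivity, subtract the limit, perturb to a block sequence, and invoke \eqref{eq: c0 in T*}. Two caveats, though. First, the claim that ``Kottman's constant is always computed along weakly null sequences when the space is reflexive'' is false in general; what makes it work here is the suppression $1$-unconditionality of the basis (equivalently, the non-strict Opial property), and the paper carries out this reduction by hand via a sliding-hump argument rather than quoting a general principle. Second, the ``main obstacle'' you anticipate---ruling out long $c_0$-type combinations via the admissibility constraint---is a red herring: once the sequence has been perturbed to a block sequence of norm at most $1+\eta$, a single application of \eqref{eq: c0 in T*} to two terms already gives $\|u_i-u_j\|\leq(1+2\eta)/\theta$, with no further analysis needed.

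The lower bound is where your plan actually breaks. The paper's argument is a two-line duality computation on the canonical basis: for $2\leq i<j$ one reads off $\|e_i\pm e_j\|_{T_\theta}=\max\{1,2\theta\}$ directly from the recursive definition, and then $2=\langle e^*_i\pm e^*_j,\,e_i\pm e_j\rangle$ forces $\|e^*_i\pm e^*_j\|_{T^*_\theta}\geq 2/\max\{1,2\theta\}$, which is exactly the claimed value in both regimes. Your constructions, by contrast, are problematic. For $\theta\leq 1/2$ you need block vectors with $\|\sum\alpha_i u_{j_i}\|\approx\max_i|\alpha_i|$, i.e.\ almost-isometric copies of $\ell_\infty^k$; but \eqref{eq: c0 in T*} only gives equivalence constant $1/\theta\geq 2$, and since $T^*_\theta$ contains no copy of $c_0$ this cannot be improved---your vectors $v_n$ will have norm bounded away from $1$, and after normalisation the mutual distances drop well below $2$ (the ``$\ell_1$-type lower estimate'' you invoke lives in $T_\theta$, not in its dual). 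For $\theta>1/2$, your last sentence lower-bounds $\|u_{j_n}\pm u_{j_m}\|_{T^*_\theta}$ via the recursive term ``$\tfrac1\theta\|E_1(\cdot)\|$''; but that formula defines the norm on $T_\theta$, not on $T^*_\theta$, so the computation is in the wrong space. Any lower bound in $T^*_\theta$ must come from pairing against a suitable unit vector of $T_\theta$---which is precisely the duality trick the paper uses, and which makes the whole lower-bound step trivial.
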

At the risk of stating the obvious, let us note that Theorem B is a direct consequence of Theorem \ref{Ks for T*}. We could derive the proof of the result under consideration from the already mentioned result by Maluta and Papini concerning reflexive Banach spaces with the non-strict Opial property; however, we prefer to offer a direct argument, based on the suppression $1$-unconditionality. As the reader will see, the proof is based on a sliding hump argument, somewhat similar to Kottman's proof (\cite{Kottman}) that $K(\ell_p)=2^{1/p}$, for $1\leq p<\infty$.

\begin{proof} Let us start with a lower bound for $K^s(T^*_\theta)$, which is simply obtained by checking the canonical basis $(e^*_j)_{j=2}^\infty$. Indeed, given natural numbers $2\leq i<j$, in the definition of $\n_{T_\theta}$ the optimal choice of sets $E_1,\dots,E_k$ is clearly given by $E_1=\{i\}$ and $E_2=\{j\}$. Consequently, we obtain
$$\|e_i\pm e_j\|_{T_\theta}=\max\left\{\|e_i\pm e_j\|_\infty,2\theta \right\}=\max\left\{1,2\theta \right\},$$
whence
$$2=\langle e^*_i\pm e^*_j,e_i\pm e_j\rangle\leq\max\left\{1,2\theta \right\} \cdot\|e^*_i\pm e^*_j\|_{T^*_\theta}.$$
This leads us to the desired lower bound
$$K^s(T^*_\theta)\geq\begin{cases}2 & \text{if }\theta\in(0,1/2]\\ 1/\theta & \text{if }\theta\in(1/2,1). \end{cases}$$\smallskip

Therefore, the argument will be concluded when we show that $K(T^*_\theta)\leq1/\theta$ whenever $\theta\in(1/2,1)$. Let us thus pick an arbitrary $r$-separated sequence $(x_j)_{j=1}^\infty$ in the unit ball of $T^*_\theta$ and assume, up to passing to a subsequence, that it admits a weak limit, say $x$. 

Let us now fix arbitrarily a parameter $\eta>0$; we may then find a finite subset $E$ of $\N$, of the form $E=\{1,\dots,N\}$, such that $\|x-Ex\|_{T^*_\theta}<\eta$. Since $(x_j)_{j=1}^\infty$ converges to $x$ weakly, up to discarding finitely many terms from the sequence, we can additionally assume that $\|Ex_i -Ex_j\|_{T^*_\theta}<\eta$, for every $i,j\in\N$.\smallskip

Setting $E^\complement:=\N\setminus E$, we therefore obtain, for distinct $i,j\in\N$,
\begin{equation}\label{eq1: proof T*}
r\leq\|x_i-x_j\|_{T^*_\theta}\leq \left\|E^\complement x_i- E^\complement x_j\right\|_{T^*_\theta} +\eta = \left\|E^\complement(x_i-x) - E^\complement(x_j-x)\right\|_{T^*_\theta} +\eta.
\end{equation}

Let us now observe that $\left(E^\complement(x_j-x)\right)_{j=1}^\infty$ is a weakly null sequence, whose terms satisfy
$$\|E^\complement(x_j-x)\|_{T^*_\theta}\leq\|E^\complement x_j\|_{T^*_\theta}+ \|x-Ex\|_{T^*_\theta}\leq 1+\eta\qquad(j\in\N),$$
where we used the suppression $1$-unconditionality in the last inequality. Consequently, up to passing to one more subsequence, we can assume that there exists a block sequence $(u_j)_{j=1}^\infty$ of the canonical basis such that $\|E^\complement(x_j-x)-u_j\|_{T^*_\theta} <\eta$. Evidently, $\|u_j\|_{T^*_\theta}<1+2\eta$. Up to passing to one more, last, subsequence, (\ref{eq: c0 in T*}) then yields
$$\|u_1-u_2\|_{T^*_\theta}\leqslant\frac{1}{\theta} \max\{\|u_1\|_{T^*_\theta}, \|u_2\|_{T^*_\theta}\}\leq \frac{1+2\eta}{\theta}.$$

Finally, insertion of this last inequality into (\ref{eq1: proof T*}) leads us to
$$r\leq \|u_1-u_2\|_{T^*_\theta} +3\eta \leq 3\eta + \frac{1+2\eta}{\theta},$$
whence the desired estimate $K(T^*_\theta)\leq1/\theta$ follows, upon letting $\eta\to 0^+$.
\end{proof}

Let us observe that the proof of the upper estimate in the previous argument didn't depend on any specific property of the norm of $T^*_\theta$; it only depended on reflexivity, the $c_0$-behaviour of basic sequences contained in (\ref{eq: c0 in T*}) and the suppression $1$-unconditionality. We already commented that, more generally, we could have used the non-strict Opial property, instead of unconditionality. Therefore, the above proof also offers us a more general result, whose formulation requires the notion of \emph{asymptotic $c_0$} Banach space (see, \emph{e.g.}, \cite{AGR}, or \cite{MMT} for a more general definition that does not require the existence of a Schauder basis).

\begin{definition}[{\cite[Definition III.4.1]{AGR}}] A Banach space with a Schauder basis $(e_j)_{j=1}^\infty$ is said to be  \emph{$\lambda$-asymptotic $c_0$} if for every normalised block basis $(x_j)_{j=1}^\infty$ and every $n\in\N$ there exists a finite subsequence $x_{j_1},\dots,x_{j_n}$ which is $\lambda$-equivalent to the canonical basis of $\ell_\infty^n$.
\end{definition}

We are now in position to state and prove a general result that subsumes the above result on the original Tsirelson's space.
\begin{theorem}\label{Th: asymptotic c0} Let $X$ be a reflexive, $\lambda$-asymptotic $c_0$ Banach space with the non-strict Opial property. Then
$$K(X)\leq\lambda.$$
\end{theorem}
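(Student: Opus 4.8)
The plan is to mimic the proof of the upper bound $K(T^*_\theta)\leq 1/\theta$ given above, isolating exactly the three ingredients that were actually used: reflexivity, the $\lambda$-asymptotic $c_0$ behaviour of normalised block bases, and the non-strict Opial property (which replaces suppression $1$-unconditionality). Start with an arbitrary $r$-separated sequence $(x_j)_{j=1}^\infty$ in $B_X$; by reflexivity and passing to a subsequence we may assume $x_j\xrightarrow{w} x$ for some $x\in B_X$. The goal is to show $r\leq\lambda$, whence $K(X)\leq\lambda$ after taking the supremum over all such sequences.

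First I would fix $\eta>0$ and use the non-strict Opial property to control the distances $\|x_i-x_j\|$ in terms of a weakly null block sequence. Concretely, since $(x_j-x)_{j=1}^\infty$ is weakly null, a standard Bessaga--Pe\l czy\'nski gliding-hump argument (after discarding finitely many terms and passing to a subsequence) produces a normalised-up-to-$\eta$ block basis $(u_j)_{j=1}^\infty$ of $(e_j)_{j=1}^\infty$ with $\|x_j-x-u_j\|_{X}<\eta$ and $\|u_j\|\leq 1+\eta$. This uses reflexivity only through the existence of the weak limit. Then, for distinct $i,j$, the triangle inequality gives $\|x_i-x_j\|\leq\|u_i-u_j\|+2\eta$, and one still has to pass from $\|x_i-x_j\|\geq r$ to a lower bound for $\|u_i-u_j\|$; here the non-strict Opial property enters, exactly as Maluta--Papini's Theorem~4.1 is invoked: it guarantees $\gamma_0(X)=K(X)$, so without loss of generality the original $r$-separated sequence may be taken weakly null from the outset, which lets us simply take $x=0$ above and avoid the $E^\complement$-truncation step of the Tsirelson proof entirely.

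With $(x_j)_{j=1}^\infty$ weakly null and $r$-separated in $B_X$, pass (gliding hump) to a block basis $(u_j)_{j=1}^\infty$ with $\|u_j\|\leq 1+\eta$ and $\|u_i-u_j\|\geq r-2\eta$. Now apply the $\lambda$-asymptotic $c_0$ hypothesis to the normalised block basis $(u_j/\|u_j\|)_{j}$ with $n=2$: there is a finite subsequence $2$-element block basis that is $\lambda$-equivalent to the basis of $\ell_\infty^2$, hence $\|u_{j_1}-u_{j_2}\|\leq\lambda\max\{\|u_{j_1}\|,\|u_{j_2}\|\}\leq\lambda(1+\eta)$. Combining, $r-2\eta\leq\lambda(1+\eta)$, and letting $\eta\to 0^+$ yields $r\leq\lambda$, as desired. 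The bookkeeping with the perturbation constants (normalising the block basis, the extra $\eta$'s from the approximation) is entirely routine.

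The main obstacle, such as it is, is legitimately invoking the Maluta--Papini identity $K(X)=\gamma_0(X)$ to reduce to weakly null sequences, since that is where the non-strict Opial property is really consumed; if one preferred a self-contained argument one would instead retain the $E^\complement$-truncation trick from the Tsirelson proof, writing $\|x_i-x_j\|\leq\|E^\complement(x_i-x)-E^\complement(x_j-x)\|+\eta$ where $E$ is a large enough finite initial segment with $\|x-Ex\|<\eta$ --- but this step silently used suppression $1$-unconditionality in the Tsirelson proof, so for general $X$ one must replace it by the Opial estimate $\liminf_j\|x_j-x\|\leq\liminf_j\|x_j-y\|$ applied along the sequence. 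Either way, once the reduction to weakly null sequences is in hand the remainder is a verbatim transcription of the Tsirelson argument with $1/\theta$ replaced by $\lambda$, so I expect no further difficulty.
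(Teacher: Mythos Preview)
Your proposal is correct and follows essentially the same route as the paper: invoke Maluta--Papini's identity $K(X)=\gamma_0(X)$ to reduce to a weakly null $r$-separated sequence in $B_X$, perturb it to a block basis $(u_j)$ with $\|u_j\|\leq 1+\eta$, and then apply the $\lambda$-asymptotic $c_0$ hypothesis with $n=2$ to bound $\|u_{j_1}-u_{j_2}\|\leq\lambda(1+\eta)$, concluding by letting $\eta\to 0^+$. Your explicit normalisation of the $u_j$'s before applying the asymptotic-$c_0$ definition is a detail the paper leaves implicit, but otherwise the arguments coincide.
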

\begin{proof} According to \cite[Theorem 4.1]{MaPa}, we know that $K(X)=\gamma_0(X)$. Fixed arbitrarily $\eta>0$, we can therefore select a weakly null $(K(X)-\eta)$-separated sequence $(x_j)_{j=1}^\infty$ in the unit ball of $X$. Up to passing to a subsequence, we can assume that there exists a block basis $(u_j)_{j=1}^\infty$ of the basis $(e_j)_{j=1}^\infty$ such that $\|x_j-u_j\|<\eta$. The assumption $X$ being $\lambda$-asymptotic $c_0$ then implies, up to passing to a further subsequence,
$$\|u_j-u_i\|\leq\lambda \max\{\|u_j\|,\|u_i\|\}\leq(1+\eta)\lambda.$$
Consequently,
$$K(X)-\eta\leq \|x_i-x_j\|\leq \|u_j-u_i\|+2\eta\leq (1+\eta)\lambda+ 2\eta,$$
whence the conclusion follows by letting $\eta\to 0^+$.
\end{proof}

In conclusion of our note, let us observe that all the assumptions in the above result are necessary. The Banach space $c_0$ itself is an obvious example that the assumption of reflexivity can not be dispensed with. More interesting is the fact that some `monotonicity' assumption is indeed necessary. This is consequence of the result by Castillo, Gonz\'ales, and Papini \cite[Theorem 4.2]{CGP} that every Banach space is isometric to a hyperplane of a Banach space with Kottman's constant equal to 2. When combined with the obvious fact that a Banach space is $\lambda$-asymptotic $c_0$ whenever some its hyperplane is so, we readily conclude that, for every $\lambda>1$ there exists a reflexive, $\lambda$-asymptotic $c_0$ Banach space whose Kottman's constant equals $2$.

\medskip{}
\textbf{Acknowledgements.} Part of the results in this paper originates from a conversation with Pavlos Motakis, at the conference \emph{Non Linear Functional Analysis} held at CIRM, Marseille. In particular, Pavlos Motakis suggested us the second proof of Theorem A presented in \S\ref{Sec: 2nd proof} and conjectured that some form of Theorem \ref{Th: asymptotic c0} might be true. The author is most grateful to Pavlos for sharing his insight with us and for his interest in the topic.



\begin{thebibliography}{99}
\bibitem{ak} F.~Albiac and N.~Kalton, \emph{Topics in Banach space theory}, Graduate Texts in Mathematics, \textbf{233}. Springer, New York, 2006.
\bibitem{AGR} S.A.~Argyros, G.~Godefroy, and H.P.~Rosenthal, Descriptive set theory and Banach spaces, \emph{Handbook of the geometry of Banach spaces}, Vol.~2, 1007--1069. North-Holland, Amsterdam, 2003.
\bibitem{BeaLa} B.~Beauzamy and J.T.~Laprest\'e, \emph{Mod\`eles \'etal\'es des espaces de Banach},  Travaux en Cours \textbf{4}, Hermann, Paris, 1984.
\bibitem{CaSh} P.G.~Casazza and T.J.~Shura, \emph{Tsirelson's space}, Lecture Notes in Mathematics, \textbf{1363}. Springer-Verlag, Berlin, 1989.
\bibitem{CGP} J.M.F.~Castillo, M.~Gonz\'{a}lez, and P.L.~Papini, New results on Kottman's constant, \emph{Banach J. Math.~Anal.} \textbf{11} (2017), 348--362.
\bibitem{CaPa} J.M.F.~Castillo and P.L.~Papini, On Kottman's constant in Banach spaces, \emph{Function Spaces IX}, \emph{Banach Center Publ.} \textbf{92} (2011), 75--84.
\bibitem{delpech} S.~Delpech, Separated sequences in asymptotically uniformly convex Banach spaces, \emph{Colloq.~Math.} \textbf{119} (2010), 123--125.
\bibitem{DORR} J.~Dronka, L.~Olszowy and L.~Rybarska-Rusinek, Separability of weakly convergent sequences in Banach spaces, \emph{Panamer. Math. J.} \textbf{16} (2006), 67--82.
\bibitem{E-O} J.~Elton and E.~Odell, The unit ball of every infinite-dimensional normed linear space contains a~$(1+\e)$-separated sequence, \emph{Colloq. Math.} \textbf{44} (1981), 105--109.
\bibitem{FiJo} T.~Figiel and W.~B.~Johnson, A uniformly convex Banach space which contains no $\ell_p$, \emph{Compositio Math.} \textbf{29} (1974), 179--190.
\bibitem{FOSS14} D.~Freeman, E.~Odell, B.~Sari, and Th.~Schlumprecht, Equilateral sets in uniformly smooth Banach spaces, \emph{Mathematika} \textbf{60} (2014), 219--231.
\bibitem{FOSZ} D.~Freeman, E.~Odell, B.~Sari, and B.~Zheng, On spreading sequences and asymptotic structures, \emph{Trans.~Amer.~Math.~Soc.} \textbf{370} (2018), 6933--6953.
\bibitem{GlMe} E.~Glakousakis and S.K.~Mercourakis, Antipodal sets in infinite dimensional Banach spaces, \texttt{arXiv:1801.02002}.
\bibitem{HKR} P.~H\'ajek, T.~Kania, and T.~Russo, Symmetrically separated sequences in the unit sphere of a Banach space, \textit{J. Funct. Anal.} \textbf{275} (2018), 3148--3168.
\bibitem{james-distortion} R.C.~James, Uniformly non-square Banach spaces, \emph{Ann. of Math.} \textbf{80} (1964), 542--550.
\bibitem{james-bases} R.C.~James, Bases and reflexivity of Banach spaces, \emph{Ann. of Math.} \textbf{52} (1950), 518--527.
\bibitem{Koszmider} P.~Koszmider, Uncountable equilateral sets in Banach spaces of the form $C(K)$, \emph{Israel J.~Math.} \textbf{224} (2018), 83--103.
\bibitem{Kottman} C.A.~Kottman, Subsets of the unit ball that are separated by more than one, \emph{Studia Math.} \textbf{53} (1975), 15--27.
\bibitem{KrP} A.~Kryczka and S.~Prus, Separated sequences in nonreflexive Banach spaces, \emph{Proc. Amer. Math. Soc.} \textbf{129} (2000), 155--163.
\bibitem{MaPa} E.~Maluta and P.L.~Papini, Estimates for Kottman's separation constant in reflexive Banach spaces, \emph{Colloq.~Math.} \textbf{117} (2009), 105--119.
\bibitem{MMT} B.~Maurey, V.D.~Milman, and N.~Tomczak-Jaegermann, Asymptotic infinite-dimensional theory of Banach spaces, \emph{Geometric aspects of functional analysis (Israel, 1992--1994)}, 149--175. 
Oper.~Theory Adv.~Appl. \textbf{77}, Birkh{\"a}user, Basel, 1995.
\bibitem{MV equilateral} S.K.~Mercourakis and G.~Vassiliadis, Equilateral sets in infinite dimensional Banach spaces, \emph{Proc. Amer. Math. Soc.} \textbf{142} (2014), 205--212.
\bibitem{mv} S.K.~Mercourakis and G.~Vassiliadis, Equilateral Sets in Banach Spaces of the form $C(K)$, \emph{Studia Math.} \textbf{231} (2015), 241--255.
\bibitem{NaSa} S.V.R.~Naidu and K.P.R.~Sastry, Convexity conditions in normed linear spaces, \emph{J. Reine Angew. Math.} \textbf{297} (1978), 35--53.
\bibitem{Odell-stability} E.~Odell, Stability in Banach spaces, \emph{Extracta Math.} \textbf{17} (2002), 385--425.
\bibitem{prus} S.~Prus, Constructing separated sequences in Banach spaces, \emph{Proc. Amer. Math. Soc.} \textbf{138} (2010), 225--234.
\bibitem{Ramsey} F.P.~Ramsey, On a Problem of Formal Logic, \emph{Proc. London Math. Soc.} \textbf{30} (1929), 264--286.
\bibitem{Riesz} F.~Riesz, \"Uber lineare Funktionalgleichungen, \emph{Acta Math.}, \textbf{41} (1916), 71--98.
\bibitem{Rosenthal l1} H.P.~Rosenthal, A characterization of Banach spaces containing $\ell_1$, \emph{Proc. Nat. Acad. Sci. U.S.A.} \textbf{71} (1974), 2411--2413.
\bibitem{Terenzi1} P.~Terenzi, Regular sequences in Banach spaces, \emph{Rend. Sem. Mat. Fis. Milano 57 (1987)}, 275--285.
\bibitem{Terenzi2} P.~Terenzi, Equilater sets in Banach spaces, \emph{Boll. Un. Mat. Ital. A (7)} \textbf{3} (1989), 119--124.
\bibitem{Tsirelson} B.S.~Tsirelson, Not every Banach space contains an embedding of $\ell_p$ or $c_0$, \emph{Funct. Anal. Appl.} \textbf{8} (1974), 138--141.
\end{thebibliography}
\end{document}